\theoremstyle{plain}
\newtheorem{theorem}{Theorem}
\newtheorem{lemma}[theorem]{Lemma}
\numberwithin{equation}{section}
\numberwithin{theorem}{section}
\numberwithin{table}{section}
\numberwithin{figure}{section}
\setlist[enumerate]{label=(\roman*),font=\rm,leftmargin=1.2cm,itemsep=1pt,
parsep=1pt,before={\parskip=1pt}}
\DeclarePairedDelimiter{\floor}{\lfloor}{\rfloor}
\DeclarePairedDelimiter{\res}{\langle}{\rangle}
\DeclareMathOperator{\denom}{denom}
\newcommand{\set}[1]{\left\{#1\right\}}
\newcommand{\phrase}[1]{``\textsl{#1}"}
\newcommand{\andq}{\quad \text{and} \quad}
\newcommand{\pmods}[1]{(\mathrm{mod}\;#1)}
\newcommand{\iffs}{\Leftrightarrow}
\newcommand{\iffq}{\;\iff\;}
\newcommand{\mids}{\,\mid\,}
\newcommand{\nmids}{\,\nmid\,}
\newcommand{\valueat}[1]{{\,}_{\big|\, #1}}
\newcommand{\NN}{\mathbb{N}}
\newcommand{\ZZ}{\mathbb{Z}}
\newcommand{\QQ}{\mathbb{Q}}
\newcommand{\BN}{\mathbf{B}}
\newcommand{\BS}{\mathcal{B}}
\newcommand{\BF}{\mathfrak{B}}
\newcommand{\DN}{\mathbf{D}}
\newcommand{\DC}{\mathcal{D}}
\newcommand{\IM}{\mathcal{I}}
\newcommand{\linktext}[1]{\scriptsize\texttt{#1}}
\newcommand{\arXiv}[1]{\href{https://arxiv.org/abs/#1}{\linktext{arXiv:#1~[math.NT]}}}
\newcommand{\badw}[1]{\href{http://publikationen.badw.de/de/#1}{\linktext{BAdW:\,#1}}}
\title[Shifted sums of the Bernoulli numbers]
{Shifted sums of the Bernoulli numbers,\\reciprocity, and denominators}
\author{Bernd C. Kellner}
\subjclass[2020]{11B68 (Primary), 11B65 (Secondary)}
\address{G\"ottingen, Germany}
\email{bk@bernoulli.org}
\keywords{Bernoulli number and polynomial, shifted sum, reciprocity relation,
denominator}
\begin{document}

\begin{abstract}
We consider the numbers $\mathcal{B}_{r,s} = (\mathbf{B}+1)^r \, \mathbf{B}^s$
(in umbral notation $\mathbf{B}^n = \mathbf{B}_n$ with the Bernoulli numbers)
that have a well-known reciprocity relation, which is frequently found in the
literature and goes back to the 19th century. In a recent paper,
self-reciprocal Bernoulli polynomials, whose coefficients are related to these
numbers, appeared in the context of power sums and the so-called Faulhaber
polynomials. The numbers $\mathcal{B}_{r,s}$ can be recursively expressed by
iterated sums and differences, so it is not obvious that these numbers do not
vanish in general. As a main result among other properties, we show the
non-vanishing of these numbers, apart from exceptional cases. We further derive
an explicit product formula for their denominators, which follows from a
von Staudt--Clausen type relation.
\end{abstract}

\maketitle


\section{Introduction}

Define the numbers
\begin{equation} \label{eq:bs-def}
  \BS_{r,s} = \sum_{\nu=0}^{r} \binom{r}{\nu} \BN_{s+\nu}
\end{equation}
with rank $r \geq 0$ and shift $s \geq 0$, where the Bernoulli numbers $\BN_n$
are defined by
\[
  \frac{t}{e^t-1} = \sum_{n \geq 0} \BN_n \frac{t^n}{n!}.
\]
The numbers $\BN_n$ are rational, whereas $\BN_n = 0$ for odd $n \geq 3$, and
the sign of $\BN_n$ alternates for even $n \geq 2$. The numbers $\BS_{r,s}$
satisfy the well-known reciprocity relation
\begin{equation} \label{eq:bs-recip}
  (-1)^r \BS_{r,s} = (-1)^s \BS_{s,r},
\end{equation}
see Section~\ref{sec:umbral} for some historical background.
By $\binom{r+1}{\nu} = \binom{r}{\nu} + \binom{r}{\nu-1}$ for $\nu \geq 1$,
the recurrence of the binomial coefficients gives rise to
\begin{align}
  \BS_{r+1,s} &= \BS_{r,s} + \BS_{r,s+1}. \label{eq:bs-recur} \\
\shortintertext{It easily follows by induction the general rule for $n \geq 0$ that}
  \BS_{r+n,s} &= \sum_{\nu=0}^{n} \binom{n}{\nu} \BS_{r,s+\nu}. \label{eq:bs-recur-2}
\end{align}

For $r,s \geq 0$, we have the special values
\begin{equation} \label{eq:bs-bn}
\begin{alignedat}{2}
  \BS_{0,s} &= \BN_s, &\quad \BS_{1,s} &= \BN_s + \BN_{s+1}, \\
  \BS_{r,0} &= (-1)^r \, \BN_r, &\quad \BS_{r,1} &= (-1)^{r+1}(\BN_r + \BN_{r+1}).
\end{alignedat}
\end{equation}
Note that $\BS_{1,s} = (-1)^{s+1} \BS_{s,1} \neq 0$, since only one of the
numbers $\BN_s$ and $\BN_{s+1}$ is zero when $s \geq 2$. For $s \in \set{0,1}$,
see the computed values of $\BS_{1,s}$ in the following table. In particular,
the first row contains the Bernoulli numbers.

\begin{table}[H] \small
\newcommand{\mc}[1]{\multicolumn{1}{r}{\hspace*{1.5em}$#1$}}
\setstretch{1.3}
\begin{center}
\begin{tabular}{c|*{9}{r}}
  \toprule
  \diagbox[width=2.2em, height=2.2em]{$r$}{$s$} &
    \mc{0} & \mc{1} & \mc{2} & \mc{3} & \mc{4} & \mc{5} & \mc{6} & \mc{7} & \mc{8} \\\hline
  $0$ & $1$ & $-\frac{1}{2}$ & $\frac{1}{6}$ & $0$ & $-\frac{1}{30}$ & $0$ & $\frac{1}{42}$ & $0$ & $-\frac{1}{30}$ \\
  $1$ & $\frac{1}{2}$ & $-\frac{1}{3}$ & $\frac{1}{6}$ & $-\frac{1}{30}$ & $-\frac{1}{30}$
      & $\frac{1}{42}$ & $\frac{1}{42}$ & $-\frac{1}{30}$ & $-\frac{1}{30}$ \\
  $2$ & $\frac{1}{6}$ & $-\frac{1}{6}$ & $\frac{2}{15}$ & $-\frac{1}{15}$ & $-\frac{1}{105}$
      & $\frac{1}{21}$ & $-\frac{1}{105}$ & $-\frac{1}{15}$ & $\frac{7}{165}$ \\
  $3$ & $0$ & $-\frac{1}{30}$ & $\frac{1}{15}$ & $-\frac{8}{105}$ & $\frac{4}{105}$
      & $\frac{4}{105}$ & $-\frac{8}{105}$ & $-\frac{4}{165}$ & $\frac{32}{165}$ \\
  $4$ & $-\frac{1}{30}$ & $\frac{1}{30}$ & $-\frac{1}{105}$ & $-\frac{4}{105}$ & $\frac{8}{105}$
      & $-\frac{4}{105}$ & $-\frac{116}{1155}$ & $\frac{28}{165}$ & $\frac{2524}{15015}$ \\
  $5$ & $0$ & $\frac{1}{42}$ & $-\frac{1}{21}$ & $\frac{4}{105}$ & $\frac{4}{105}$
      & $-\frac{32}{231}$ & $\frac{16}{231}$ & $\frac{5072}{15015}$ & $-\frac{8128}{15015}$ \\
  $6$ & $\frac{1}{42}$ & $-\frac{1}{42}$ & $-\frac{1}{105}$ & $\frac{8}{105}$ & $-\frac{116}{1155}$
      & $-\frac{16}{231}$ & $\frac{6112}{15015}$ & $-\frac{3056}{15015}$ & $-\frac{22928}{15015}$ \\
  $7$ & $0$ & $-\frac{1}{30}$ & $\frac{1}{15}$ & $-\frac{4}{165}$ & $-\frac{28}{165}$ & $\frac{5072}{15015}$
      & $\frac{3056}{15015}$ & $-\frac{3712}{2145}$ & $\frac{1856}{2145}$ \\
  $8$ & $-\frac{1}{30}$ & $\frac{1}{30}$ & $\frac{7}{165}$ & $-\frac{32}{165}$ & $\frac{2524}{15015}$
      & $\frac{8128}{15015}$ & $-\frac{22928}{15015}$ & $-\frac{1856}{2145}$ & $\frac{362624}{36465}$ \\
  \bottomrule
\end{tabular}

\caption{First numbers $\BS_{r,s}$.}
\label{tbl:bs-coeff}
\end{center}
\end{table}

The shifted sum \eqref{eq:bs-def} can be interpreted as a binomial transform,
but it is rather an iterated difference of the Bernoulli numbers. Using
$\Delta$ as the forward difference operator (see the next section for notation)
and the symmetry by \eqref{eq:bs-recip}, we obtain
\[
  \BS_{r,s} = (-1)^{r+s} \Delta^{\!r} \, (-1)^\nu \, \BN_\nu \valueat{\nu = s}
    = \Delta^{\!s} \, (-1)^\nu \, \BN_\nu \valueat{\nu = r}.
\]
Note that $(-1)^n \, \BN_n = \BN_n$ except for $n=1$.
The analog to \eqref{eq:bs-recur-2} is given by
\[
  \BS_{r+n,s} = (-1)^{s+n} \Delta^{\!n} \, (-1)^{\nu} \, \BS_{r,\nu} \valueat{\nu = s}
  \andq \BS_{r,s+n} = \Delta^{\!n} \, \BS_{\nu,s} \valueat{\nu = r}.
\]

Together with \eqref{eq:bs-recur-2}, the numbers $\BS_{r+n,s}$ are given by
iterated sums over the shifts (respectively, columns), while the numbers
$\BS_{r,s+n}$ are given by iterated differences over the ranks (respectively,
rows of Table~\ref{tbl:bs-coeff}).

In 1877, Seidel~\cite{Seidel:1877} already computed a difference scheme of the
Bernoulli numbers in triangular form, moving from column to column.
The resulting table is reprinted as Figure~\ref{fig:seidel} in the appendix,
which coincides with Table~\ref{tbl:bs-coeff}.

Rewriting Table~\ref{tbl:bs-coeff} by its anti-diagonals, the symmetry of
$\BS_{r,s}$ is then visible in each line. More precisely, the sequence of those
elements when viewed as absolute values is palindromic. Furthermore, the sum
over these anti-diagonal elements vanishes for $n \geq 1$ as follows.

\begin{lemma} \label{lem:diag-sum}
For $n \geq 1$, we have on the $n$th anti-diagonal that
\[
  \sum_{n = r+s} \BS_{r,s} = 0.
\]
\end{lemma}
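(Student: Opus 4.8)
The plan is to evaluate the anti-diagonal sum in umbral notation as a finite geometric series. Writing $\BS_{r,s} = (\BN+1)^r\,\BN^s$, the sum over the $n$th anti-diagonal reads
\[
  \sum_{n=r+s}\BS_{r,s} \;=\; \sum_{r=0}^{n}(\BN+1)^r\,\BN^{\,n-r}.
\]
I would then apply the polynomial identity $(a-b)\sum_{r=0}^{n}a^r b^{n-r}=a^{n+1}-b^{n+1}$ with $a=\BN+1$ and $b=\BN$; since $a-b=1$, the right-hand side collapses to $(\BN+1)^{n+1}-\BN^{\,n+1}$. Finally I would invoke the defining umbral recurrence of the Bernoulli numbers, $(\BN+1)^m=\BN^m$ for $m\ge 2$ (equivalently $\sum_{k=0}^{m-1}\binom{m}{k}\BN_k=0$): because $n\ge 1$ forces $m=n+1\ge 2$, this difference vanishes, which proves the claim.

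To make the geometric-series step self-contained without leaning on umbral formalism, one can argue directly at the level of the coefficients of the individual Bernoulli numbers. Expanding \eqref{eq:bs-def},
\[
  \sum_{n=r+s}\BS_{r,s} \;=\; \sum_{r=0}^{n}\sum_{\nu=0}^{r}\binom{r}{\nu}\,\BN_{n-r+\nu},
\]
and collecting, for each fixed $k$ with $0\le k\le n$, the terms with $n-r+\nu=k$ (so $\nu=k-n+r$ and $n-k\le r\le n$), the coefficient of $\BN_k$ equals $\sum_{i=0}^{k}\binom{n-k+i}{i}$ with $i=r-(n-k)$. By the hockey-stick identity this is $\binom{n+1}{k}$, so the whole sum equals $\sum_{k=0}^{n}\binom{n+1}{k}\BN_k$, which is $0$ by the classical recurrence $\sum_{k=0}^{m-1}\binom{m}{k}\BN_k=0$ valid for $m=n+1\ge 2$.

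A quick consistency check worth recording: for odd $n$ the statement is in fact immediate from the reciprocity relation \eqref{eq:bs-recip}, since then $\BS_{r,s}=(-1)^n\BS_{s,r}=-\BS_{s,r}$ pairs the anti-diagonal terms into cancelling couples $(r,s)\leftrightarrow(s,r)$ with no fixed point; the content of the lemma is thus really the even case, which the argument above covers uniformly. I do not anticipate a genuine obstacle here — the only points requiring care are the bookkeeping in the interchange of summations and the reindexing behind the hockey-stick step, and (on the umbral route) making explicit that the geometric-series manipulation is a bona fide identity in $\QQ[\BN]$ that is only afterwards pushed through the umbral substitution $\BN^k\mapsto\BN_k$.
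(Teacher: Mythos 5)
Your proof is correct, but it takes a genuinely different route from the paper's for the substantive (even $n$) case. The paper also dispatches odd $n$ via the reciprocity relation \eqref{eq:bs-recip}, exactly as in your consistency check; for even $n$, however, it stays entirely inside the $\BS_{r,s}$ framework: it rewrites each term as $\BS_{r,s} = \BS_{r+1,s} - \BS_{r,s+1}$ using \eqref{eq:bs-recur}, reindexes both pieces onto the $(n+1)$st anti-diagonal, and observes that the mismatch at the endpoints is $\BS_{n+1,0} - \BS_{0,n+1} = 0$ because $n+1 \geq 3$ is odd --- a three-line telescoping argument. You instead reduce the whole sum (uniformly in the parity of $n$) to the classical recurrence $\sum_{k=0}^{n}\binom{n+1}{k}\BN_k = 0$, either via the umbral geometric series $\sum_{r=0}^{n}(\BN+1)^r\BN^{n-r} = (\BN+1)^{n+1} - \BN^{n+1}$ or, equivalently, by collecting the coefficient of each $\BN_k$ with the hockey-stick identity; your care in noting that the geometric-series step is a polynomial identity in $\QQ[\BN]$ applied \emph{before} the umbral substitution is exactly the point that makes the argument rigorous. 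What each approach buys: the paper's is shorter and exploits structure ($\BS_{n+1,0}=\BS_{0,n+1}=0$) already recorded in \eqref{eq:bs-bn}, while yours is self-contained modulo only the defining recurrence of the Bernoulli numbers, treats both parities at once, and yields the sharper closed form $\sum_{n=r+s}\BS_{r,s} = (\BN+1)^{n+1}-\BN^{n+1}$ as a byproduct.
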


\begin{proof}
For odd $n \geq 1$ the sum is cancelled out by \eqref{eq:bs-recip}.
Now let $n \geq 2$ be even. Since we have $\BS_{n+1,0} = \BS_{0,n+1} = 0$
by \eqref{eq:bs-bn}, we derive from \eqref{eq:bs-recur} that
\[
  \sum_{n = r+s} \BS_{r,s} = \sum_{n = r+s} (\BS_{r+1,s} - \BS_{r,s+1})
    = \sum_{n+1 = r'+s'} (\BS_{r',s'} - \BS_{r',s'}) = 0
\]
by extending and changing the summation. This completes the proof.
\end{proof}

As a preliminary result, there exist on each anti-diagonal for $n \geq 1$ at
least two nonzero numbers with opposite sign, since the numbers $\BS_{1,s}$
and $\BS_{s,1}$ never vanish, whereas the sum on the anti-diagonal does.

It is not obvious that the numbers $\BS_{r,s}$ for $r,s \geq 2$ do not vanish
in general, since they may differ in sign and obey recurrences by means of
iterated sums and differences, as shown above.

For this reason, after introducing some definitions, we present the following
two main results. Henceforth, let $p$ denote a prime. Define the denominators
\begin{alignat*}{3}
  \DC_{r,s} &= \denom(\BS_{r,s}) &\quad& (r,s \geq 0), \\
  \DN_n &= \denom(\BN_n) && (n \geq 0).
\end{alignat*}
Let $\res{x}_m$ denote the least positive residue of $x \pmod{m}$.

\begin{theorem} \label{thm:main-1}
For $r,s \geq 0$, the numbers $\BS_{r,s}$ never vanish, except for the
cases $(r,s) = (n,0)$ and $(r,s) = (0,n)$ when $n \geq 3$ is odd.
\end{theorem}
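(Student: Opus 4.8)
The plan is to separate off the degenerate rows and columns and then to put a single prime to work on the rest. If $r=0$, then $\BS_{0,s}=\BN_s$ by \eqref{eq:bs-bn}, which vanishes exactly for odd $s\ge 3$; if $s=0$, then $\BS_{r,0}=(-1)^r\BN_r$ by \eqref{eq:bs-bn}, which vanishes exactly for odd $r\ge 3$. These account for precisely the exceptional pairs, so what remains is the range $r,s\ge 1$. There I would prove the stronger statement that $3\mid\DC_{r,s}$: once the denominator of $\BS_{r,s}$ is divisible by $3$, the number is not an integer, hence in particular $\BS_{r,s}\ne 0$.

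The tool is the von Staudt--Clausen theorem. Since $3-1=2$ divides every even integer, the prime $3$ occurs --- to the first power --- in the denominator of $\BN_m$ for every even $m\ge 2$, while $\BN_m=0$ for odd $m\ge 3$ and $\BN_1=-\tfrac12$ has no factor $3$. Writing, for even $m\ge 2$,
\[
  \BN_m=A_m-\frac13-\!\!\sum_{\substack{(p-1)\mid m\\ p\ne 3}}\frac1p,\qquad A_m\in\ZZ,
\]
and substituting into $\BS_{r,s}=\sum_{\nu=0}^{r}\binom{r}{\nu}\BN_{s+\nu}$, the summands with $s+\nu$ odd (hence $\ge 3$) disappear, the summand $\nu=0$ contributes $\BN_1=-\tfrac12$ when $s=1$, and each remaining summand contributes an integer together with $-\tfrac13\binom{r}{\nu}$ and finitely many fractions with denominators $\ne 3$. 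Collecting the factor $\tfrac13$ yields $\BS_{r,s}=N-\tfrac13\,C_{r,s}$, where the denominator of $N$ is prime to $3$ and
\[
  C_{r,s}=\sum_{\substack{0\le\nu\le r\\ s+\nu\ \mathrm{even},\ s+\nu\ge 2}}\binom{r}{\nu}.
\]

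It then suffices to check $3\nmid C_{r,s}$ for $r,s\ge 1$. If $s\ge 2$, the condition $s+\nu\ge 2$ is automatic, so $C_{r,s}=\sum_{2\mid(s+\nu)}\binom{r}{\nu}$; if $s=1$, the only excluded index is $s+\nu=1$, i.e.\ $\nu=0$, so $C_{r,1}=\sum_{\nu\ \mathrm{odd}}\binom{r}{\nu}$. In both cases $C_{r,s}=2^{r-1}$ by the elementary identity $\sum_{\nu\ \mathrm{even}}\binom{r}{\nu}=\sum_{\nu\ \mathrm{odd}}\binom{r}{\nu}=2^{r-1}$ (valid for $r\ge 1$), which is prime to $3$. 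Hence $3\mid\DC_{r,s}$ and $\BS_{r,s}\ne 0$, which would complete the proof.

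The one delicate point is the non-cancellation just used: several of the Bernoulli numbers $\BN_{s+\nu}$ carry a factor $3$ in their denominators, and a priori the binomially weighted sum could annihilate it. What makes the argument go through is that $3$ divides every even Bernoulli denominator to \emph{exactly} the first power, so the whole obstruction collapses to the transparent count $C_{r,s}=2^{r-1}$. A natural alternative --- seeking a prime $p$ with $p-1$ dividing a single $s+\nu$ and $p\nmid\binom{r}{\nu}$ --- would force one to find primes in the short interval $[\,r+2,\,r+s+1\,]$, which fails for small $s$, so the prime $3$ really is what does the work here. (Running the same computation over all primes $p$ with $(p-1)\mid(s+\nu)$ for some $\nu$ would presumably recover the full denominator formula announced in the abstract, but for non-vanishing the single prime $3$ already suffices.)
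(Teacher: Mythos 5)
Your proof is correct and follows essentially the same route as the paper: the paper reduces Theorem~\ref{thm:main-1} to the statement $3\mid\DC_{r,s}$ for $r,s\ge 1$ (Theorem~\ref{thm:bs-denom}\,(v)), which is proved there by exactly your computation, namely that the von Staudt--Clausen contribution of the prime $3$ is $\Psi_{r,s}(3)=\sum_{2\mid s+\nu}\binom{r}{\nu}=2^{r-1}$, coprime to $3$ --- your $C_{r,s}$ is precisely this $\Psi_{r,s}(3)$. The only cosmetic difference is that you treat the boundary case $s=1$ by hand, whereas the paper handles it via the explicit values of $\DC_{1,s}$ and the reciprocity relation.
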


\begin{theorem} \label{thm:main-2}
If $r,s \geq 1$, then we have
\[
  \DC_{r,s} = 2^{\varepsilon_2} \times 3 \times \mspace{-10mu}
    \prod_{\substack{5 \,\leq\, p \,\leq\, r+s+1 \\
      \res{r}_{p-1}+\res{s}_{p-1} \,\geq\, p-1}}
    \mspace{-10mu} p,
\]
where $\varepsilon_2 = 1$ if $r=1$ or $s=1$ with $r \neq s$,
otherwise $\varepsilon_2=0$. In the remaining cases, we have for $n \geq 0$ that
\[
  \DC_{n,0} = \DC_{0,n} = \DN_n.
\]
In particular, $\DC_{r,s}$ is squarefree for all $r,s \geq 0$ and obeys
the reciprocity relation
\[
  \DC_{r,s} = \DC_{s,r}.
\]
\end{theorem}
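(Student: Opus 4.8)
The plan is to compute, for each prime $p$ and all $r,s\ge1$, the $p$-adic valuation $v_p(\BS_{r,s})$, and then read off $\DC_{r,s}=\prod_{p\,:\,v_p(\BS_{r,s})<0}p$. The only input about the Bernoulli numbers is the local von Staudt--Clausen theorem: for a prime $p$ and $m\ge1$ one has $\BN_m\in\ZZ_p$, except that $\BN_m+\tfrac{1}{p}\in\ZZ_p$ (with $v_p(\BN_m)=-1$) when $m$ is even with $(p-1)\mid m$, and also when $p=2$, $m=1$. Since $s\ge1$, every index $s+\nu$ in \eqref{eq:bs-def} satisfies $s+\nu\ge1$; hence for $p\ge5$ we have $\BS_{r,s}+\tfrac{1}{p}\,S_p\in\ZZ_p$ with $S_p:=\sum_{0\le\nu\le r,\,(p-1)\mid(s+\nu)}\binom{r}{\nu}$, for $p=3$ we have $\BS_{r,s}+\tfrac{1}{3}\,S_3\in\ZZ_3$ with $S_3:=\sum_{0\le\nu\le r,\,2\mid(s+\nu)}\binom{r}{\nu}$, and for $p=2$, where $\BN_1$ also contributes, $\BS_{r,s}+\tfrac{1}{2}\,T_2\in\ZZ_2$ with $T_2:=[s=1]+S_3$ (the Iverson bracket being $1$ if $s=1$, else $0$). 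In each case it follows that $v_p(\BS_{r,s})\ge-1$ --- so $\DC_{r,s}$ is automatically squarefree --- and that $p\mid\DC_{r,s}$ if and only if $p$ does not divide $S_p$, $S_3$, respectively $T_2$.

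The crux is to evaluate $S_p\bmod p$ for $p\ge5$. Using $\sum_{x=1}^{p-1}x^{k}\equiv-[(p-1)\mid k]\pmod{p}$ (valid for $k\ge0$) and interchanging summations, one gets $S_p\equiv-\sum_{x=1}^{p-1}x^{s}(1+x)^{r}\pmod{p}$. The term $x=p-1$ drops out since $r\ge1$, and for $1\le x\le p-2$ Fermat's little theorem lets us replace $x^{s}$ by $x^{\bar s}$ and $(1+x)^{r}$ by $(1+x)^{\bar r}$, where $\bar r:=\res{r}_{p-1}$ and $\bar s:=\res{s}_{p-1}$ lie in $\{1,\dots,p-1\}$. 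Since $\bar r,\bar s\ge1$, the terms $x=0$ and $x=p-1$ now contribute $0$, so the sum may be re-extended over $x=0,\dots,p-1$; expanding $(1+x)^{\bar r}$ and applying $\sum_{x=0}^{p-1}x^{m}\equiv-[(p-1)\mid m]\pmod{p}$ (valid for $m\ge1$, and $\bar s+j\ge1$ throughout) leaves only the exponent $\bar s+j=p-1$ and, when $\bar r=\bar s=p-1$, also $\bar s+j=2p-2$. This yields
\[
  S_p\;\equiv\;\bigl[\,\bar r+\bar s\ge p-1\,\bigr]\binom{\bar r}{\,p-1-\bar s\,}\;+\;\bigl[\,\bar r=\bar s=p-1\,\bigr]\pmod{p}.
\]
A binomial coefficient with upper entry at most $p-1$ is a $p$-adic unit, and $2\not\equiv0\pmod{p}$ for $p\ge5$; hence $p\nmid S_p$ if and only if $\bar r+\bar s\ge p-1$, i.e.\ $v_p(\BS_{r,s})=-1$ in that case and $v_p(\BS_{r,s})\ge0$ otherwise. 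Carrying out this character-sum evaluation correctly --- in particular using the ``least positive residue'' normalisation $\bar r,\bar s\in\{1,\dots,p-1\}$ so that the two boundary terms vanish, and pinning down the surviving binomial coefficient --- is the step I expect to be the main obstacle.

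It remains to collect the local contributions. For $p=3$, $S_3=\sum_{\nu\equiv s\,(2)}\binom{r}{\nu}=2^{r-1}$ for $r\ge1$ (a sum of binomial coefficients of one parity), which is a $3$-adic unit, so $3\mid\DC_{r,s}$ for all $r,s\ge1$: this is the factor $3$. For $p=2$, a short check on $r=1$ versus $r\ge2$ shows $T_2=[s=1]+2^{r-1}$ is odd exactly when precisely one of $r,s$ equals $1$, i.e.\ exactly when $\varepsilon_2=1$: this is the factor $2^{\varepsilon_2}$. For $p\ge5$, if $p>r+s+1$ then $1\le r,s\le p-2$, so $\bar r=r$, $\bar s=s$ and $\bar r+\bar s=r+s<p-1$; hence only primes $5\le p\le r+s+1$ can occur, as stated. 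This establishes the product formula for $r,s\ge1$. The remaining cases are immediate from $\BS_{n,0}=(-1)^{n}\BN_n$ and $\BS_{0,n}=\BN_n$ in \eqref{eq:bs-bn}, giving $\DC_{n,0}=\DC_{0,n}=\DN_n$, which is squarefree by von Staudt--Clausen; together with $v_p(\BS_{r,s})\ge-1$ this proves squarefreeness in general, and the reciprocity $\DC_{r,s}=\DC_{s,r}$ follows at once from $\BS_{r,s}=\pm\,\BS_{s,r}$ in \eqref{eq:bs-recip} (or from the evident symmetry of the formula).
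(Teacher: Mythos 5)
Your proposal is correct, and the key step is carried out by a genuinely different method than the paper's. Both arguments share the same skeleton: a local von Staudt--Clausen analysis showing $v_p(\BS_{r,s})\ge -1$ with $p\mid \DC_{r,s}$ iff $p$ fails to divide the binomial sum $\Psi_{r,s}(p)=\sum_{(p-1)\mid s+\nu}\binom{r}{\nu}$ (your $S_p$), and the same direct evaluation $2^{r-1}$ at $p=2,3$. Where you diverge is the evaluation of $\Psi_{r,s}(p)\bmod p$ for $p\ge 5$: the paper first proves a mod-$p$ reciprocity $(-1)^r\Psi_{r,s}(p)\equiv(-1)^s\Psi_{s,r}(p)$ by lifting the reciprocity of $\BS_{r,s}$ itself, uses that together with $\Psi_{p,s}(p)\equiv\Psi_{1,s}(p)$ to get periodicity in $r$ modulo $p-1$, invokes the Hermite--Stern congruence (Lemma~\ref{lem:bin-congr}) for the cases $p-1\mid r$ or $p-1\mid s$, and only then reduces to the fundamental domain $1\le r,s\le p-2$, where $\Psi$ is computed by inspection (Lemma~\ref{lem:psi-congr-2}). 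Your single character-sum computation, based on $\sum_{x=1}^{p-1}x^k\equiv-[(p-1)\mid k]\pmod p$, yields the closed form $S_p\equiv[\bar r+\bar s\ge p-1]\binom{\bar r}{p-1-\bar s}+[\bar r=\bar s=p-1]\pmod p$ uniformly for all $r,s\ge 1$, so it needs neither the reciprocity of $\BS_{r,s}$ nor Lemma~\ref{lem:bin-congr}, and it also absorbs the boundary cases $r=1$ or $s=1$ (which the paper treats separately via Theorem~\ref{thm:bs-denom}\,(iii) and \eqref{eq:bn-denom}) into the same local analysis through the extra Iverson term $[s=1]$ at $p=2$. I checked the normalisation issues you flagged --- the vanishing of the $x=0$ and $x=p-1$ boundary terms because $\bar r,\bar s\ge 1$, the possible second multiple $\bar s+j=2p-2$ contributing an extra $1$ exactly when $\bar r=\bar s=p-1$ (harmless since $2\not\equiv 0$), and the unit-ness of $\binom{\bar r}{p-1-\bar s}$ --- and they all go through; your route is shorter and more self-contained, at the cost of not producing the paper's intermediate structural results (the mod-$p$ reciprocity of $\Psi$ and its periodicity), which the paper records as lemmas of independent interest.
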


In a recent paper~\cite{Kellner:2021} of the author, self-reciprocal Bernoulli
polynomials, whose coefficients $\BF_{n,k}$ are related to the numbers $\BS_{r,s}$,
appeared in the context of power sums and the so-called Faulhaber polynomials,
while the reciprocity relation~\eqref{eq:bs-recip} casually followed as a corollary.
Therefore, the properties of $\BS_{r,s}$ are of certain interest to show the
non-vanishing of the aforementioned coefficients $\BF_{n,k}$, besides exceptional
cases.

Denneberg and Grabisch~\cite{Denneberg&Grabisch:1999}, for example,
considered similar numbers $b_m^d$. Due to different definition and indexing,
there is the relationship $b_m^d = \BS_{m,d-m}$, whereas Lemma~\ref{lem:diag-sum}
and the reciprocity relation~\eqref{eq:bs-recip} are formulated similarly.

The paper is organized as follows. The next section is devoted to preliminaries.
In Section~\ref{sec:umbral}, we use umbral calculus to give a short proof of
the reciprocity relation~\eqref{eq:bs-recip}, which can be extended to
Bernoulli polynomials in~\eqref{eq:bp-recip} as well.
Section~\ref{sec:prop} shows further properties of $\BS_{r,s}$ and $\DC_{r,s}$.
The last section contains the proofs of the main theorems above.


\section{Preliminaries}
\label{sec:prel}

For usual notation and definitions, we refer to the standard books
\cite{Comtet:1974,Graham&others:1994}. The forward difference operator and
its powers~$\Delta^n$ for $n \geq 0$ are given by
\[
  \Delta^{\!n} f(x) = \sum_{\nu=0}^n \binom{n}{\nu} (-1)^{n-\nu} f(x+\nu),
\]
where we use the expression, for example, $\Delta^n f(x+y) \valueat{x = 1}$
to indicate the variable and an initial value when needed.

The usual definition of the denominator $\denom(q)$ for $q \in \QQ$
is the smallest positive integer $d$ such that $d \cdot q \in \ZZ$.
In particular, $\denom(q) = 1$ if and only if $q \in \ZZ$.

By the von~Staudt--Clausen theorem, independently found by
von Staudt~\cite{Staudt:1840} and Clausen~\cite{Clausen:1840} in 1840,
the Bernoulli numbers with even indices satisfy
\begin{equation} \label{eq:bn-frac}
  \BN_n + \sum_{p-1 \mids n} \frac{1}{p} \in \ZZ \quad (n \in 2\NN).
\end{equation}
As a consequence, it follows for all $n \geq 0$ (cf.~Table~\ref{tbl:bs-coeff}) that
\begin{equation} \label{eq:bn-denom}
  \DN_n = \begin{cases}
    1, & \text{if $n = 0$}, \\
    2, & \text{if $n = 1$}, \\
    1, & \text{if $n \geq 3$ is odd}, \\
    \prod\limits_{p-1 \mids n} p, & \text{if $n \geq 2$ is even}.
  \end{cases}
\end{equation}

The Bernoulli polynomials are defined for $n \geq 0$ by
\begin{align}
  \BN_n(x) &= \sum_{\nu=0}^{n} \binom{n}{\nu} \BN_{n-\nu} \, x^\nu, \label{eq:bp-def} \\
\shortintertext{which obey the translation formula}
  \BN_n(x+y) &= \sum_{\nu=0}^{n} \binom{n}{\nu} \BN_{n-\nu}(x) \, y^\nu \label{eq:bp-trans} \\
\shortintertext{as well as the reflection relation}
  \BN_n(1-x) &= (-1)^n \, \BN_n(x). \label{eq:bp-refl}
\end{align}
It follows that $\BN_n(0) = \BN_n$ and $\BN_n(1) = (-1)^n \, \BN_n$.

We further need the following congruence. Hermite~\cite{Hermite:1876} showed
the case for odd $m$, while Stern~\cite{Stern:1878} gave a proof for the
general case, see also Bachmann~\cite[Eq.~(116), p.~46]{Bachmann:1910}.

\begin{lemma} \label{lem:bin-congr}
If $p$ is a prime and $m \geq 1$, then
\begin{equation} \label{eq:bin-congr}
  \sum_{\substack{\nu = 1\\p-1 \mids \nu}}^{m-1}
    \binom{m}{\nu} \equiv 0 \pmod{p}.
\end{equation}
\end{lemma}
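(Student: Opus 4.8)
The plan is to deduce \eqref{eq:bin-congr} from a roots-of-unity filter adapted to the prime field $\ZZ/p\ZZ$, in which the nonzero residues $1,2,\dots,p-1$ take the place of the $(p-1)$th roots of unity. The only ingredient required is the classical power-sum congruence
\[
  \sum_{a=1}^{p-1} a^{\nu} \equiv
    \begin{cases} -1 \pmod{p}, & \text{if } p-1 \mids \nu, \\ \phantom{-}0 \pmod{p}, & \text{otherwise}, \end{cases}
\]
valid for every integer $\nu \geq 0$. For $p-1 \mids \nu$ this is Fermat's little theorem ($a^{\nu}\equiv 1$ for each unit $a$), giving $p-1\equiv -1$; otherwise, writing the units as powers of a primitive root $g$ turns the sum into the geometric series $\sum_{i=0}^{p-2} g^{i\nu}$, which vanishes since $g^{(p-1)\nu}\equiv 1$ while $g^{\nu}\not\equiv 1$.

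Next I would compute $T := \sum_{a=1}^{p-1}(1+a)^m$ modulo $p$ in two different ways. Expanding by the binomial theorem and swapping the order of summation,
\[
  T = \sum_{\nu=0}^{m}\binom{m}{\nu}\sum_{a=1}^{p-1} a^{\nu}
    \equiv -\sum_{\substack{0 \leq \nu \leq m\\ p-1 \mids \nu}}\binom{m}{\nu} \pmod{p}.
\]
On the other hand, as $a$ ranges over $1,\dots,p-1$, the quantity $1+a$ ranges over $2,\dots,p$, so --- using $m\geq 1$, hence $p^{m}\equiv 0$ ---
\[
  T = \sum_{b=2}^{p} b^{m} \equiv \Bigl(\sum_{b=1}^{p-1} b^{m}\Bigr) - 1 \equiv -\delta - 1 \pmod{p},
\]
where $\delta = 1$ if $p-1 \mids m$ and $\delta = 0$ otherwise. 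Comparing the two evaluations of $T$ yields
\[
  \sum_{\substack{0 \leq \nu \leq m\\ p-1 \mids \nu}}\binom{m}{\nu} \equiv 1 + \delta \pmod{p}.
\]

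Finally I would isolate the two boundary terms of this last sum: the term $\nu=0$ always occurs and contributes $\binom{m}{0}=1$; the term $\nu=m$ occurs exactly when $p-1 \mids m$, contributing $\binom{m}{m}=1$, and it is distinct from $\nu=0$ because $m\geq 1$; all remaining terms are precisely those appearing in \eqref{eq:bin-congr}. Writing $S$ for that sum, the congruence above reads $1 + \delta + S \equiv 1 + \delta \pmod{p}$, whence $S \equiv 0 \pmod{p}$, as claimed. I do not expect a genuine obstacle here: once the power-sum congruence is granted, the rest is a short manipulation of binomial sums, and the boundary-term bookkeeping is the only place one must be slightly careful (about $\nu=0$, about $\nu=m$, and about the hypothesis $m\geq 1$). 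An alternative would be induction on $m$ via Pascal's rule $\binom{m+1}{\nu}=\binom{m}{\nu}+\binom{m}{\nu-1}$, peeling off the indices $\nu$ for which $\nu$ or $\nu-1$ is the least positive multiple of $p-1$; this also works but is more cumbersome, so I would present the filter argument.
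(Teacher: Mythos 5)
Your argument is correct, and every step checks out: the power-sum congruence is stated correctly even at $\nu=0$ (where $p-1 \mids 0$ and the sum is $p-1\equiv -1$), the double counting of $T=\sum_{a=1}^{p-1}(1+a)^m$ is sound, and the boundary bookkeeping correctly isolates the $\nu=0$ term and the $\nu=m$ term (present exactly when $p-1\mids m$, and distinct from $\nu=0$ since $m\geq 1$); the degenerate case $p=2$ also goes through. There is nothing to compare against internally: the paper does not prove Lemma~\ref{lem:bin-congr} at all, but cites it as a classical result of Hermite (odd $m$) and Stern (general $m$), with Bachmann as a textbook reference. Your roots-of-unity filter over $(\ZZ/p\ZZ)^{\times}$ is the standard modern proof of that classical congruence, so what your write-up buys is a short, self-contained verification where the paper is content to outsource the statement to the nineteenth-century literature; either choice is legitimate, and your version could be inserted as a proof of the lemma without change.
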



\section{Umbral calculus}
\label{sec:umbral}

Using the umbral notation $\BN^n = \BN_n$, the relation
$\BN_n(1) = (-1)^n \, \BN_n$ becomes
\[
  (\BN+1)^n = (-\BN)^n.
\]
By linearity, it follows for any polynomial $f$ that
\[
  f(\BN+1) = f(-\BN).
\]
Taking $f(y) = y^r (y-1)^s$ for $r,s \geq 0$, this easily yields
\[
  (\BN+1)^r \, \BN^s = (-1)^{r+s} \, \BN^r (\BN+1)^s,
\]
which equals the reciprocity relation \eqref{eq:bs-recip}. This classical approach,
for example, is shown by Gessel~\cite[Lem.\,7.2, p.\,416]{Gessel:2003}.
In 1922, Tits~\cite[p.\,191]{Tits:1922} similarly noticed that
\[
  (\BN+1)^r \, \BN^s = \BN^r (\BN-1)^s
\]
(used with an old definition of $\BN_n$ in \cite{Tits:1922}),
which he attributed to Stern~\cite{Stern:1878b} of 1878.
See also Lucas~\cite[Sec.\,135, p.\,240]{Lucas:1891} of 1891, who also applied
this umbral trick. However, the reciprocity relation actually goes back to
von~Ettingshausen~\cite[Lec.\,42, Eq.\,(65), p.\,284]{Ettingshausen:1827} in 1827,
who used finite differences to achieve the result.

In particular, such formulas are of certain interest, since they lead to
shortened recurrences of the Bernoulli numbers. For example, one obtains
formulas involving about half of the numbers, i.e., $\BN_n, \ldots, \BN_{2n}$,
see \cite[Lec.\,42, Eqs.\,(66),(67), pp.\,284--285]{Ettingshausen:1827}
and \cite{Tits:1922}.

By exploiting these methods, Lehmer~\cite{Lehmer:1935} even derived a
recurrence using the numbers $\BN_{2n-12\lambda}$ with
$0 \leq \lambda \leq \floor{n/6}$, so effectively only $n/6$ nonzero terms are
needed. However, Carlitz~\cite{Carlitz:1964} showed that such recurrences cannot
be shortened arbitrarily, and so far all known recurrences contain $O(n)$
Bernoulli numbers. See also Agoh and Dilcher \cite{Agoh&Dilcher:2008}.

Now, we switch from Bernoulli numbers to polynomials.
If we extend definition~\eqref{eq:bs-def} to
\[
  \BS_{r,s}(x) = \sum_{\nu=0}^{r} \binom{r}{\nu} \BN_{s+\nu}(x),
\]
then the formulas mainly transfer from $\BS_{r,s}$ to $\BS_{r,s}(x)$,
where $\BS_{r,s} = \BS_{r,s}(0)$. For example,
\[
  \BS_{r+1,s}(x) = \BS_{r,s}(x) + \BS_{r,s+1}(x).
\]
In contrast, the generalized reciprocity relation holds asymmetrically by
\begin{equation} \label{eq:bp-recip}
  (-1)^r \BS_{r,s}(x) = (-1)^s \BS_{s,r}(-x).
\end{equation}

This can be shown as follows. From~\eqref{eq:bp-def} -- \eqref{eq:bp-refl},
we infer that
\[
  \sum_{\nu=0}^{n} \binom{n}{\nu} \BN_\nu(x) = \BN_n(x+1) = (-1)^n \, \BN_n(-x).
\]
Hence,
\[
  (\BN(x)+1)^n = (-\BN(-x))^n.
\]
Again, applying the polynomial $f(y) = y^r (y-1)^s$ as above, we finally derive that
\[
  (\BN(x)+1)^r \, \BN(x)^s = (-1)^{r+s} \, \BN(-x)^r \, (\BN(-x)+1)^s.
\]


\section{Further properties}
\label{sec:prop}

In this section, we discuss further results on the denominator $\DC_{r,s}$
that complement Theorem~\ref{thm:main-2}. They are also needed for the proofs
of the main theorems.

\begin{theorem} \label{thm:bs-denom}
Let $r,s \geq 0$. There are the following properties:
\begin{enumerate}
\item $\DC_{r,s} = \DC_{s,r}$.
\item $\DC_{0,s} = \DN_s$.
\item $\DC_{1,s} = \begin{cases}
         2, & \text{if $s = 0$}, \\
         3, & \text{if $s = 1$}, \\
         \DN_s, & \text{if $s \geq 2$ is even}, \\
         \DN_{s+1}, & \text{if $s \geq 3$ is odd}.
       \end{cases}$
\item $2 \nmid \DC_{r,s}$ for $r, s \geq 2$.
\item $3 \mid \DC_{r,s}$ for $r, s \geq 1$.
\item $p \mid \DC_{r,s}$ for even $r \geq 2$, where $p \geq 3$ and $p-1 \mid r$.
\end{enumerate}
In particular, $\DC_{r,s}$ is squarefree, and $\DC_{r,s} = 1$ only for the
cases $(r,s) = (0,0)$, $(r,s) = (n,0)$, and $(r,s) = (0,n)$
when $n \geq 3$ is odd.
\end{theorem}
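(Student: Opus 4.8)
The plan is to establish the six properties more or less one at a time and then read off the two concluding assertions. Properties (i)--(iii) are quick: (i) is the reciprocity relation \eqref{eq:bs-recip}, which gives $\BS_{r,s}=\pm\,\BS_{s,r}$ and hence $\DC_{r,s}=\DC_{s,r}$; (ii) is $\BS_{0,s}=\BN_s$ from \eqref{eq:bs-bn}; and for (iii) one reads off $\BS_{1,0}=\tfrac12$, $\BS_{1,1}=-\tfrac13$, and $\BS_{1,s}=\BN_s+\BN_{s+1}$ from \eqref{eq:bs-bn}, using that $\BN_n=0$ for odd $n\ge3$, so that the summand $\BN_{s+1}$ drops for even $s\ge2$ (giving $\DC_{1,s}=\DN_s$) and the summand $\BN_s$ drops for odd $s\ge3$ (giving $\DC_{1,s}=\DN_{s+1}$).

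Properties (iv)--(vi) rest on a prime-by-prime ($p$-adic) analysis, for which I record two consequences of the von Staudt--Clausen theorem \eqref{eq:bn-frac}: first, $v_p(\BN_m)\ge-1$ always, so $v_p(\BS_{r,s})\ge-1$ for every prime $p$, whence $\DC_{r,s}$ is squarefree and $p\mid\DC_{r,s}$ is equivalent to $v_p(\BS_{r,s})=-1$; second, $\BN_m$ lies in $\ZZ_{(p)}$ except when $m\ge2$ is even with $p-1\mid m$, and except for $\BN_1$ when $p=2$, and in these exceptional cases $p\,\BN_m\equiv-1\pmod p$. Substituting the latter into \eqref{eq:bs-def} collapses $\BS_{r,s}$ to
\[
  \BS_{r,s}\;\equiv\;-\frac1p\sum_{\substack{0\,\le\,\nu\,\le\,r\\ \BN_{s+\nu}\,\notin\,\ZZ_{(p)}}}\binom{r}{\nu}\pmod{\ZZ_{(p)}},
\]
so $p\mid\DC_{r,s}$ holds precisely when the binomial sum on the right is $\not\equiv0\pmod p$. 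For $p=2$ and $r,s\ge2$, the condition on $\nu$ reads $\nu\equiv s\pmod2$, and $\sum_{\nu\,\equiv\,s\,(2)}\binom r\nu=2^{r-1}\equiv0\pmod2$ since $r\ge2$; hence $2\nmid\DC_{r,s}$, which is (iv). For $p=3$ and $r,s\ge1$, the condition on $\nu$ is again $\nu\equiv s\pmod2$, the sum is again $2^{r-1}$ (now with $r\ge1$), and $3\nmid2^{r-1}$, so $v_3(\BS_{r,s})=-1$, which is (v).

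Property (vi) is the main obstacle. Here $r$ is even, $r\ge2$, $p\ge3$, and $p-1\mid r$; the case $s=0$ is immediate from $\BS_{r,0}=(-1)^r\BN_r$ together with \eqref{eq:bn-denom}, so take $s\ge1$. Then the sum above runs over all $\nu\in\{0,\dots,r\}$ with $p-1\mid s+\nu$, and one must show that
\[
  C\;:=\;\sum_{\substack{0\,\le\,\nu\,\le\,r\\ p-1\,\mid\,s+\nu}}\binom{r}{\nu}\;\not\equiv\;0\pmod p.
\]
The key point is that every $(p-1)$-th root of unity already lies in $\mathbb{F}_p$, so a roots-of-unity filter is available modulo $p$: expanding $(1+a)^r=\sum_\nu\binom r\nu a^\nu$ and using $\sum_{a\in\mathbb{F}_p^\times}a^k\equiv-1\pmod p$ for $p-1\mid k$ and $\equiv0$ otherwise, one gets
\[
  C\;\equiv\;\frac1{p-1}\sum_{a\in\mathbb{F}_p^\times}a^{s}\,(1+a)^{r}\pmod p.
\]
Since $p-1\mid r$, one has $(1+a)^r=1$ for all $a\in\mathbb{F}_p^\times$ with $a\ne-1$, while the term $a=-1$ contributes $0$; applying the power-sum identity once more and $\tfrac1{p-1}\equiv-1$, this evaluates $C$ to $2\pmod p$ when $p-1\mid s$ and to $(-1)^s\pmod p$ otherwise --- nonzero in either case because $p\ge3$. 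This is an extension of Lemma~\ref{lem:bin-congr} to an arbitrary residue class, and getting the filter together with the two power-sum cases exactly right is the one place that needs genuine care. It follows that $v_p(\BS_{r,s})=-1$, i.e.\ $p\mid\DC_{r,s}$.

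It remains to pin down when $\DC_{r,s}=1$. Squarefreeness was noted above, so $\DC_{r,s}=1$ forces $\min(r,s)=0$, because otherwise $3\mid\DC_{r,s}$ by (v). By (i) and (ii) the value is then $\DN_{\max(r,s)}$, which by \eqref{eq:bn-denom} equals $1$ exactly when $\max(r,s)=0$ or $\max(r,s)\ge3$ is odd, i.e.\ precisely in the cases $(r,s)=(0,0)$ and $(r,s)=(n,0)$, $(0,n)$ with $n\ge3$ odd.
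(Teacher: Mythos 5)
Your proof is correct, and its skeleton coincides with the paper's: parts (i)--(iii) are read off from \eqref{eq:bs-recip} and \eqref{eq:bs-bn}, and parts (iv)--(vi) come from a prime-by-prime von Staudt--Clausen analysis in which $p\mid\DC_{r,s}$ is equivalent to the non-vanishing mod $p$ of a binomial sum over the indices $\nu$ with $\BN_{s+\nu}\notin\ZZ_{(p)}$ --- this sum is exactly the quantity $\Psi_{r,s}(p)$ of \eqref{eq:psi-def}, which the paper isolates in Theorem~\ref{thm:bs-denom-2} rather than inlining. Your treatment of $p=2,3$ via the evaluation $2^{r-1}$ is identical to the paper's (and in fact handles (v) for all $r,s\ge1$ in one stroke, where the paper first proves it for $r,s\ge2$ and then bootstraps via parts (i) and (iii)). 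The genuine divergence is in (vi): the paper invokes the Hermite--Stern congruence (Lemma~\ref{lem:bin-congr}) to annihilate the interior terms of $\Psi_{r,s}(p)$ and reduce to the boundary contributions $\nu\in\{0,r\}$, giving a value $a\in\{1,2\}$; you instead run a roots-of-unity filter over $\mathbb{F}_p^{\times}$, writing the sum as $\tfrac1{p-1}\sum_{a\in\mathbb{F}_p^\times}a^s(1+a)^r$ and evaluating it in closed form mod $p$ as $2$ or $(-1)^s$. I checked the filter computation and it is right. Your route is self-contained (it reproves Lemma~\ref{lem:bin-congr} as a by-product and yields the exact residue, not merely non-vanishing), at the cost of redoing classical work the paper simply cites; the paper's route is shorter on the page but leans on the external congruence and on the reciprocity \eqref{eq:psi-recip} to transfer between the conditions $p-1\mid r$ and $p-1\mid s$. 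The closing argument identifying the cases $\DC_{r,s}=1$ matches the paper's.
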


The proof of Theorem~\ref{thm:bs-denom} is postponed to the end of this section,
since it relies on further results that we shall consider below.
We need the following definitions.

Let $\ZZ_p$ be the ring of $p$-adic integers and $\QQ_p$ be the field of
$p$-adic numbers.
Define for $r,s \geq 0$ and any prime $p$ the sum
\begin{align}
  \Psi_{r,s}(p) &=
    \sum_{\substack{\nu=0\\2 \mids s+\nu\\p-1 \mids s+\nu}}^{r}
    \mspace{-5.4mu} \binom{r}{\nu},
  \label{eq:psi-def} \\
\shortintertext{where we have that}
  \Psi_{r,s}(p) &= 0 \quad (p > s+r+1).
  \label{eq:psi-0}
\end{align}
Thus, summing $\Psi_{r,s}(p)$ over the primes satisfies that
\begin{equation} \label{eq:psi-sum}
  \sum_{p} \Psi_{r,s}(p) < \infty.
\end{equation}
A congruence of the type $\Psi_{r,s}(p) \equiv a \pmod{p}$ can be seen as a
generalization of congruence~\eqref{eq:bin-congr}.

Using properties of $\Psi_{r,s}(p)$,
we arrive at a von~Staudt--Clausen relation for $\BS_{r,s}$ and $\DC_{r,s}$,
similar to the classical relations \eqref{eq:bn-frac} and \eqref{eq:bn-denom}
for $\BN_n$ and $\DN_n$, respectively.

\begin{theorem} \label{thm:bs-denom-2}
Let $r,s \geq 2$. For any prime $p$, we have the reciprocity relation
\begin{equation} \label{eq:psi-recip}
  (-1)^r \, \Psi_{r,s}(p) \equiv (-1)^s \, \Psi_{s,r}(p) \pmod{p}.
\end{equation}
We have the integrality relation
\[
  \BS_{r,s} + \sum_{p} \frac{1}{p} \, \Psi_{r,s}(p) \in \ZZ,
\]
which implies the denominator formula
\begin{equation} \label{eq:dc-prod}
  \DC_{r,s} = \prod_{p \nmids \Psi_{r,s}(p)} p,
\end{equation}
where the index in the sum, respectively product,
can be bounded by $3 \leq p \leq r+s+1$.

\noindent Moreover, there are the following divisibility properties:
\begin{enumerate}
\item $2 \mid \Psi_{r,s}(2)$.
\item $3 \nmid \Psi_{r,s}(3)$.
\item $p \nmid \Psi_{r,s}(p)$, if $p \geq 5$ where $p-1 \mid r$ or $p-1 \mid s$.
\end{enumerate}
\end{theorem}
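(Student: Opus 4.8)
The plan is to derive everything from the von~Staudt--Clausen theorem~\eqref{eq:bn-frac}, the reciprocity relation~\eqref{eq:bs-recip}, the Hermite--Stern congruence (Lemma~\ref{lem:bin-congr}), and one roots-of-unity computation in $\mathbb{F}_p$. I begin with the integrality relation, from which the denominator formula~\eqref{eq:dc-prod} and its index range follow at once. Since $r,s\ge 2$, every index $s+\nu$ in $\BS_{r,s}=\sum_{\nu=0}^{r}\binom{r}{\nu}\BN_{s+\nu}$ is $\ge 2$; the summands with $s+\nu$ odd vanish, and for $s+\nu$ even \eqref{eq:bn-frac} gives $\BN_{s+\nu}\equiv-\sum_{p-1\mids s+\nu}1/p\pmod{\ZZ}$. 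Multiplying by the integer $\binom{r}{\nu}$, summing over $\nu$, and interchanging the (finite) double sum over $\nu$ and $p$, the inner sum is precisely $\Psi_{r,s}(p)$ by~\eqref{eq:psi-def}; note that for odd $p$ the parity condition in~\eqref{eq:psi-def} is automatic while for $p=2$ it is the active one, so both cases are handled uniformly. This gives $\BS_{r,s}+\sum_{p}\frac{1}{p}\Psi_{r,s}(p)\in\ZZ$, the sum ranging over $p\le r+s+1$ by~\eqref{eq:psi-0}. Solving for $\BS_{r,s}$ and using that the terms $\frac{1}{p}\Psi_{r,s}(p)$ have pairwise coprime denominators (namely $p$ if $p\nmids\Psi_{r,s}(p)$ and $1$ otherwise), the denominator of $\BS_{r,s}$ is the product over the former primes, which is~\eqref{eq:dc-prod}; in particular $\DC_{r,s}$ is squarefree, and once property~(i) is proved the prime $2$ drops out, leaving $3\le p\le r+s+1$.

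For the reciprocity~\eqref{eq:psi-recip}, let $p$ be odd (for $p=2$ it holds trivially, both sides being even by~(i)). Then the parity condition in~\eqref{eq:psi-def} is redundant, so $\Psi_{r,s}(p)=\sum_{\substack{0\le\nu\le r\\ p-1\mids s+\nu}}\binom{r}{\nu}$, and a roots-of-unity filter over $\mathbb{F}_p^{\times}$---whose $p-1$ elements are exactly the $(p-1)$th roots of unity, with $1/(p-1)\equiv-1\pmod{p}$---gives $\Psi_{r,s}(p)\equiv\frac{1}{p-1}\sum_{\omega\in\mathbb{F}_p^{\times}}\omega^{s}(1+\omega)^{r}\pmod{p}$. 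Extending the sum over all of $\mathbb{F}_p$ (the $\omega=0$ term vanishes as $s\ge 1$) and substituting $\omega\mapsto-1-\omega$ turns it into $(-1)^{r+s}\sum_{\omega\in\mathbb{F}_p^{\times}}\omega^{r}(1+\omega)^{s}$ (again discarding the vanishing $\omega=0$ term), whence $\Psi_{r,s}(p)\equiv(-1)^{r+s}\Psi_{s,r}(p)\pmod{p}$, which is~\eqref{eq:psi-recip}. (Alternatively, \eqref{eq:psi-recip} follows by writing the integrality relation for both $(r,s)$ and $(s,r)$, combining it with~\eqref{eq:bs-recip}, and comparing $p$-adic valuations.)

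For the divisibility properties: in~(i), $\Psi_{r,s}(2)=2^{r-1}$, which is even since $r\ge 2$; in~(ii), $p-1=2$ for $p=3$, so the divisibility condition coincides with the parity condition and $\Psi_{r,s}(3)=\Psi_{r,s}(2)=2^{r-1}$ is prime to $3$. For~(iii), let $p\ge 5$. If $p-1\mids s$, then $\Psi_{r,s}(p)=\sum_{\substack{0\le\nu\le r\\ p-1\mids\nu}}\binom{r}{\nu}$, and isolating the terms $\nu=0$ and $\nu=r$ and applying Lemma~\ref{lem:bin-congr} to the middle range gives $\Psi_{r,s}(p)\equiv1\pmod{p}$ when $p-1\nmids r$ and $\equiv2\pmod{p}$ when $p-1\mids r$---both nonzero mod $p$ as $p\ge 5$. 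In the remaining case $p-1\mids r$, the integer $r$ is even, so $(-1)^r=1$ and~\eqref{eq:psi-recip} yields $\Psi_{r,s}(p)\equiv(-1)^s\Psi_{s,r}(p)\pmod{p}$; but $\Psi_{s,r}(p)$ has shift $r$ divisible by $p-1$, hence is prime to $p$ by the case just treated (applied to $(s,r)$), so $p\nmids\Psi_{r,s}(p)$. The argument is conceptually light; the only real care lies in the first step---the parity/divisibility bookkeeping, the interchange of summation, and the uniform treatment of $p=2$---and in noting that invoking~\eqref{eq:psi-recip} inside~(iii) is not circular, since~\eqref{eq:psi-recip} is established independently above.
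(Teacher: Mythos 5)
Your proposal is correct. The integrality relation, the denominator formula with its index bounds, and properties (i)--(iii) are established exactly as in the paper: von~Staudt--Clausen applied termwise to \eqref{eq:bs-def} with interchange of summation, the evaluation $\Psi_{r,s}(2)=\Psi_{r,s}(3)=2^{r-1}$, and Lemma~\ref{lem:bin-congr} to reduce the case $p-1\mids s$ to the boundary terms $\nu\in\{0,r\}$, then symmetry for $p-1\mids r$ (and you rightly flag that this use of \eqref{eq:psi-recip} is not circular). The one genuine divergence is your proof of the reciprocity \eqref{eq:psi-recip} itself: the paper obtains it \emph{globally}, by feeding the fractional decomposition \eqref{eq:bs-frac-p} into the known reciprocity \eqref{eq:bs-recip} of $\BS_{r,s}$ and then isolating the prime $p$ via a $p$-adic valuation argument (the ``lifting'' from $\pmods{\ZZ}$ to $\pmods{p}$); you instead give a \emph{local}, self-contained proof via the roots-of-unity filter $\Psi_{r,s}(p)\equiv\tfrac{1}{p-1}\sum_{\omega\in\mathbb{F}_p^{\times}}\omega^{s}(1+\omega)^{r}$ and the involution $\omega\mapsto-1-\omega$, which I have checked and which is sound (the parity condition is indeed vacuous for odd $p$, and the $\omega=0$ terms vanish since $r,s\geq1$). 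Your route has the advantage of making \eqref{eq:psi-recip} independent of \eqref{eq:bs-recip} and of the integrality relation, and it in fact proves the congruence for all $r,s\geq1$ directly (the paper needs the separate Lemma~\ref{lem:psi-congr-1}(iii) to extend beyond $r,s\geq2$); the paper's route is shorter given that \eqref{eq:bs-recip} is already on the table and reuses the machinery just built. You also note the paper's argument as your parenthetical alternative, so nothing is missing.
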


\begin{proof}
By the von~Staudt--Clausen theorem \eqref{eq:bn-frac}, we obtain for even
$n \geq 2$ that
\[
   - \BN_n \equiv \sum_{p-1 \mids n} \frac{1}{p} \pmod{\ZZ}.
\]
Let $r,s \geq 2$. Then equation~\eqref{eq:bs-def} turns into
\[
  - \BS_{r,s} \equiv \sum_{\nu=0}^{r} \binom{r}{\nu} \mspace{-8mu}
    \sum_{\substack{2 \mids s+\nu\\p-1 \mids s+\nu}} \mspace{-3mu}
    \frac{1}{p} \pmod{\ZZ},
\]
since we only have to take the even indexed Bernoulli numbers into account.
By changing the summation over the primes and considering
\eqref{eq:psi-0} and \eqref{eq:psi-sum}, we finally infer that
\begin{equation} \label{eq:bs-frac-p}
  - \BS_{r,s} \equiv \sum_{p} \frac{1}{p} \, \Psi_{r,s}(p) \pmod{\ZZ},
\end{equation}
where almost all sums $\Psi_{r,s}(p)$ are empty. This yields
\begin{equation} \label{eq:bs-int}
  \BS_{r,s} + \sum_{p} \frac{1}{p} \, \Psi_{r,s}(p) \in \ZZ.
\end{equation}

Only those primes $p$ give a contribution to the denominator of $\BS_{r,s}$
for which $p \nmids \Psi_{r,s}(p)$. Thus, it follows that
\begin{equation} \label{eq:dc-psi}
  \DC_{r,s} = \prod_{p \nmids \Psi_{r,s}(p)} p.
\end{equation}

Furthermore, by \eqref{eq:bs-recip} we have
\begin{align}
  (-1)^r \BS_{r,s} &\equiv (-1)^s \BS_{s,r} \pmod{\ZZ},
  \nonumber \\
\shortintertext{so by congruence \eqref{eq:bs-frac-p} this turns into}
  (-1)^r \sum_{p} \frac{1}{p} \, \Psi_{r,s}(p) &\equiv
    (-1)^s \sum_{p} \frac{1}{p} \, \Psi_{s,r}(p) \pmod{\ZZ}.
  \label{eq:psi-frac} \\
\shortintertext{Lifting to $\pmods{p}$ for any prime $p$ then provides that}
  (-1)^r \, \Psi_{r,s}(p) &\equiv (-1)^s \, \Psi_{s,r}(p) \pmod{p}.
  \label{eq:psi-sym}
\end{align}
Actually, we transfer \eqref{eq:psi-frac} lying in $\QQ_p$ to \eqref{eq:psi-sym}
lying in $\ZZ_p$. This is allowed here due to the occurring factor
$\frac{1}{p}$, which is then cancelled out.

Now we show the remaining three parts:

(i),~(ii). By definition of~\eqref{eq:psi-def}, we have
$\Psi_{r,s}(2) = \Psi_{r,s}(3)$. Depending on the parity of $s$, we infer that
\[
  \Psi_{r,s}(2) = \sum_{\substack{\nu=0\\2 \mids s+\nu}}^{r}
    \mspace{-5mu} \binom{r}{\nu}
    = \frac{1}{2} \left( (1+1)^r \pm (1-1)^r \right) = 2^{r-1},
\]
where \phrase{$\pm$} is replaced by \phrase{$+$} if $2 \mid s$,
otherwise by \phrase{$-$} if $2 \nmid s$. Since $r \geq 2$,
it follows that $2 \mid \Psi_{r,s}(2)$ and $3 \nmid \Psi_{r,s}(3)$.

(iii). Let $p \geq 5$. Due to symmetry by~\eqref{eq:psi-sym},
it suffices to consider the condition $p-1 \mid s$.
Using congruence~\eqref{eq:bin-congr} of Lemma~\ref{lem:bin-congr},
we deduce that
\[
  \Psi_{r,s}(p) \equiv
    \sum_{\substack{\nu \,\in\, \set{0,r}\\p-1 \mids s+\nu}}
    \mspace{-5mu} \binom{r}{\nu}
    \equiv a \pmod{p}
\]
with $a \in \set{1,2}$. Thus, $p \nmid \Psi_{r,s}(p)$.

Finally, by parts (i) and (ii), and \eqref{eq:psi-0},
the index $p$ in the sum of~\eqref{eq:bs-int}, respectively product
of~\eqref{eq:dc-psi}, is bounded by $3 \leq p \leq r+s+1$.
This completes the proof of the theorem.
\end{proof}

Moreover, there are some refinements of the properties of $\Psi_{r,s}(p)$.

\begin{lemma} \label{lem:psi-congr-1}
Let $r,r' \geq 1$, $s,s' \geq 0$, and $p \geq 3$ be a prime.
There are the following properties:
\begin{enumerate}
\item If $s \equiv s' \pmod{p-1}$, then
      $\Psi_{r,s}(p) = \Psi_{r,s'}(p)$.
\item If $r \equiv r' \pmod{p-1}$, then
      $\Psi_{r,s}(p) \equiv \Psi_{r',s}(p) \pmod{p}$.
\item The reciprocity relation \eqref{eq:psi-recip} extends to $r,s \geq 1$.
\end{enumerate}
\end{lemma}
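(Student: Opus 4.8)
The plan is to establish the three claims of Lemma~\ref{lem:psi-congr-1} directly from the definition~\eqref{eq:psi-def} of $\Psi_{r,s}(p)$, using only elementary manipulation of binomial sums together with the congruence~\eqref{eq:bin-congr} of Lemma~\ref{lem:bin-congr}. The key observation for all three parts is that the summation condition ``$2\mid s+\nu$ and $p-1\mid s+\nu$'' in~\eqref{eq:psi-def} depends on $s$ only through $s\bmod 2$ and $s\bmod(p-1)$; since $p\geq 3$ is odd, $p-1$ is even, so $2\mid(p-1)$ and the single condition $p-1\mid s+\nu$ already forces $2\mid s+\nu$. Hence for $p\geq 3$ we may rewrite
\[
  \Psi_{r,s}(p) = \sum_{\substack{\nu=0\\ p-1\,\mid\,s+\nu}}^{r} \binom{r}{\nu},
\]
and everything reduces to the residue of $s$ (resp.\ $r$) modulo $p-1$.

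\medskip
\emph{Part (i).} If $s\equiv s'\pmod{p-1}$, then for every $\nu$ the conditions $p-1\mid s+\nu$ and $p-1\mid s'+\nu$ are equivalent, so the two sums defining $\Psi_{r,s}(p)$ and $\Psi_{r,s'}(p)$ run over exactly the same index set of $\nu\in\{0,\dots,r\}$. This gives the \emph{equality} $\Psi_{r,s}(p)=\Psi_{r,s'}(p)$ immediately, with no congruence needed.

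\medskip
\emph{Part (ii).} Here $r$ varies, so we cannot expect equality, only a congruence mod $p$. Write $r' = r + (p-1)k$ for some integer $k$ (treating $r'>r$; the case $r'<r$ is symmetric). First I would reduce $s$ mod $p-1$ by part~(i), so without loss of generality $0\le s<p-1$. Then I would apply~\eqref{eq:bs-recur-2}-style or Vandermonde-type splitting: $\binom{r'}{\nu}=\sum_j\binom{r}{j}\binom{(p-1)k}{\nu-j}$, but a cleaner route is to use the fact that for a prime power modulus the binomial coefficients $\binom{r+(p-1)k}{\nu}$ interact with the divisibility condition $p-1\mid s+\nu$ in a controlled way. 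The most transparent argument: split the index range $\{0,\dots,r'\}$ into the ``endpoint'' residues $\nu\in\{0,r'\}$ plus the interior, and on the interior invoke Lemma~\ref{lem:bin-congr} to show the contribution is $\equiv 0\pmod p$; the same is done for $\Psi_{r,s}(p)$, and one checks that the surviving endpoint terms match modulo $p$ because $r\equiv r'\pmod{p-1}$ controls whether $p-1\mid s$ and $p-1\mid s+r$ (note $s+r\equiv s+r'\pmod{p-1}$). I expect \textbf{this step to be the main obstacle}: one must carefully account for the boundary contributions $\binom{r}{0}=\binom{r}{r}=1$ versus $\binom{r'}{0}=\binom{r'}{r'}=1$ and verify that no interior term of the form $\binom{r}{\nu}$ with $\nu\notin\{0,r\}$ but $1\le\nu\le r-1$ escapes the hypothesis of Lemma~\ref{lem:bin-congr} — the lemma is stated for the full range $1\le\nu\le m-1$ with $p-1\mid\nu$, whereas here the condition is $p-1\mid s+\nu$, so I would first reindex $\nu\mapsto\nu-s$ (or absorb $s$ into the modulus bookkeeping via part~(i), taking $s=0$) to bring it into the exact shape of~\eqref{eq:bin-congr}. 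A small extra check is needed when $s\equiv 0\pmod{p-1}$ so that $\nu=0$ itself satisfies the divisibility condition, and when $s+r\equiv 0\pmod{p-1}$ so that $\nu=r$ does; these are precisely the endpoint terms and they are consistent across $r$ and $r'$.

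\medskip
\emph{Part (iii).} The reciprocity relation~\eqref{eq:psi-recip} was proved in Theorem~\ref{thm:bs-denom-2} for $r,s\geq 2$. To extend it to $r,s\geq 1$, the only new cases are $r=1$ or $s=1$. By the symmetry of the claimed relation it suffices to treat $s=1$ (and $r\geq 1$ arbitrary). For $s=1$ and $p\geq 3$, using the rewriting above, $\Psi_{r,1}(p)=\sum_{\nu:\ p-1\mid 1+\nu}\binom{r}{\nu}$; since $p-1\mid 1+\nu$ with $0\le\nu\le r$ forces $\nu\equiv -1\equiv p-2\pmod{p-1}$, one can evaluate this explicitly (it is a sum of binomial coefficients along an arithmetic progression of step $p-1$). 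On the other side, $\Psi_{1,r}(p)=\sum_{\nu:\ p-1\mid r+\nu,\ 0\le\nu\le 1}\binom{1}{\nu}$, which is nonzero only when $p-1\mid r$ (giving the term $\nu=0$, value $1$) or $p-1\mid r+1$ (giving $\nu=1$, value $1$), so $\Psi_{1,r}(p)\in\{0,1,2\}$. I would then verify the congruence $(-1)^r\Psi_{r,1}(p)\equiv(-1)^1\Psi_{1,r}(p)\pmod p$ by a direct case analysis on $r\bmod(p-1)$, reducing the left-hand sum modulo $p$ via Lemma~\ref{lem:bin-congr} exactly as in part~(ii) so that only the endpoint terms $\nu\in\{0,r\}$ of $\Psi_{r,1}(p)$ survive. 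Alternatively, and more in the spirit of the paper, I would derive it from~\eqref{eq:bs-recip} applied to $\BS_{r,1}=(-1)^{r+1}(\BN_r+\BN_{r+1})$ together with~\eqref{eq:bs-frac-p}: the von~Staudt--Clausen expansion of $\BN_r+\BN_{r+1}$ directly exhibits the primes $p$ with $p-1\mid r$ or $p-1\mid r+1$, matching $\Psi_{1,r}(p)$, and lifting the resulting congruence mod $\ZZ$ to mod $p$ (legitimate because of the solitary factor $\tfrac1p$, as in the proof of Theorem~\ref{thm:bs-denom-2}) yields~\eqref{eq:psi-recip} in the range $r,s\geq 1$. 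This second route has the advantage of not re-deriving the endpoint bookkeeping from scratch.
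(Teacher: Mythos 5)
Your part (i) is correct and matches the paper. The genuine gap is in part (ii) (and in the first route you sketch for part (iii)): you propose to reduce $\Psi_{r,s}(p)$ modulo $p$ to its endpoint terms $\nu\in\set{0,r}$ by killing the interior with Lemma~\ref{lem:bin-congr}. That lemma applies only to the residue class $\nu\equiv 0\pmods{p-1}$, whereas the sum defining $\Psi_{r,s}(p)$ runs over $\nu\equiv -s\pmods{p-1}$; the reindexing $\nu\mapsto\nu-s$ shifts the divisibility condition but not the lower index of $\binom{r}{\nu}$, and part~(i) only lets you replace $s$ by another representative of the \emph{same} class, so you cannot take $s=0$ unless $p-1\mids s$. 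The interior contribution does not vanish in general: for $p=5$, $r=4$, $s=1$ one has $\Psi_{4,1}(5)=\binom{4}{3}=4\not\equiv 0\pmod{5}$, coming entirely from the interior index $\nu=3$, while the endpoints $\nu\in\set{0,4}$ contribute nothing. (Lemma~\ref{lem:psi-congr-2}~(iii) makes the same point structurally: for $1\le r,s\le p-2$ with $r+s\ge p-1$, $\Psi_{r,s}(p)$ equals the single, generally interior, term $\binom{r}{p-1-s}$.)

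The paper's proof of part (ii) avoids any such binomial computation: it first disposes of $r=1$ via the congruence $\Psi_{p,s}(p)\equiv\Psi_{1,s}(p)\pmod{p}$ (only $\binom{p}{0}$ and $\binom{p}{p}$ survive, and $p\equiv 1\pmods{p-1}$), reduces to $r,r',s\ge 2$ using part~(i), and then applies the already-proved reciprocity \eqref{eq:psi-recip} to move $r$ into the shift slot, where part~(i) gives exact equality, before moving it back. Your second route for part (iii) --- extending the von~Staudt--Clausen comparison \eqref{eq:bs-frac-p} to the case $s=1$ and lifting modulo $p\ge 3$ (the stray $\BN_1$ term only affects the prime $2$) --- is sound and close to the paper's mechanism; if you establish the reciprocity for $r,s\ge1$ that way first, part (ii) then follows by the same swap trick, and that is the repair I would recommend.
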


\begin{proof}
Let $p \geq 3$. We have to show three parts:

(i).~The condition in the sum~\eqref{eq:psi-def} remains unchanged by
$s + \nu \equiv s' +\nu \pmod{p-1}$, so $\Psi_{r,s}(p) = \Psi_{r,s'}(p)$.

(ii).~Note that
\[
  \Psi_{p,s}(p) \equiv
    \sum_{\substack{\nu = 0\\p-1 \mids s+\nu}}^{p} \mspace{-9mu} \binom{p}{\nu}
    \equiv \sum_{\substack{\nu \,\in\, \set{0,p}\\p-1 \mids s+\nu}} 1
    \equiv \sum_{\substack{\nu \,\in\, \set{0,1}\\p-1 \mids s+\nu}} 1
    \equiv \Psi_{1,s}(p) \pmod{p}.
\]
If $r=1$ or $r'=1$, then by the above congruence we switch to $r=p$ or $r'=p$,
respectively. If $0 \leq s < 2$, then by part~(i) we switch to $s \geq 2$.
Now, we can assume that $r,r',s \geq 2$.
Using part (i) and applying Theorem~\ref{thm:bs-denom-2} yield
\[
  \Psi_{r,s}(p) \equiv (-1)^{r+s} \, \Psi_{s,r}(p) \equiv
    (-1)^{r'+s} \, \Psi_{s,r'}(p) \equiv \Psi_{r',s}(p) \pmod{p},
\]
noting that $r$ and $r'$ have the same parity, since $p-1$ is even.

(iii).~Assume that $r,s \geq 1$. Choose $r',s' \geq 2$
with $s \equiv s' \pmod{p-1}$ and $r \equiv r' \pmod{p-1}$.
Using parts (i) and (ii) as well as applying Theorem~\ref{thm:bs-denom-2},
we infer that
\[
  (-1)^r \Psi_{r,s}(p) \equiv (-1)^{r'} \Psi_{r',s'}(p)
    \equiv (-1)^{s'} \Psi_{s',r'}(p) \equiv (-1)^s \Psi_{s,r}(p) \pmod{p},
\]
where $r$ and $r'$, respectively $s$ and $s'$, have the same parity.
\end{proof}

\begin{lemma} \label{lem:psi-congr-2}
Let $p \geq 5$ be a prime. Then we have the following congruence between matrices:
\[
  \Big(\Psi_{r,s}(p)\Big)_{1 \leq r,s \leq p-2} \equiv
    \left( \begin{array}{ccccc}
        &   &   &   & 1 \\
        &   &   & 1 & \star \\
        &   & \iddots & \vdots & \vdots \\
        & 1 & \cdots & \star & \star \\
      1 & \star & \cdots & \star & \star \\
    \end{array} \right) \pmod{p},
\]
where all empty entries are zero, the entries on the anti-diagonal are one,
and the entries below are nonzero $\pmods{p}$. More precisely, we have for
$1 \leq r,s \leq p-2$ the properties:
\begin{enumerate}
\item $\Psi_{r,s}(p) = 0$, if $r+s < p-1$.
\item $\Psi_{r,s}(p) = 1$, if $r+s = p-1$.
\item $p \nmid \Psi_{r,s}(p) = \binom{r}{p-1-s}$, if $r+s \geq p-1$.
\end{enumerate}
\end{lemma}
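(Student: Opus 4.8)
The plan is to analyze the sum $\Psi_{r,s}(p)$ directly from its definition \eqref{eq:psi-def} under the restriction $1 \leq r,s \leq p-2$, where the combined divisibility conditions $2 \mid s+\nu$ and $p-1 \mid s+\nu$ collapse nicely because $p-1$ is even: for $p \geq 5$ and $s+\nu$ in the accessible range, $p-1 \mid s+\nu$ already forces $2 \mid s+\nu$, so the parity condition is redundant and $\Psi_{r,s}(p) = \sum_{p-1 \mid s+\nu,\ 0 \leq \nu \leq r} \binom{r}{\nu}$. Since $0 \leq s \leq p-2$ and $0 \leq \nu \leq r \leq p-2$, we have $0 \leq s+\nu \leq 2(p-2) < 2(p-1)$, so the only possible value of $s+\nu$ divisible by $p-1$ is $s+\nu = p-1$ itself (the value $0$ is excluded since $s \geq 1$). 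Hence the sum has at most one term, namely $\nu = p-1-s$, and it contributes exactly $\binom{r}{p-1-s}$ when $0 \leq p-1-s \leq r$, i.e. when $r+s \geq p-1$, and is empty otherwise.

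From this the three enumerated claims follow almost immediately. For part (i), $r+s < p-1$ means $p-1-s > r$, so the index $\nu = p-1-s$ is out of range and the sum is empty, giving $\Psi_{r,s}(p) = 0$. For part (ii), $r+s = p-1$ gives $\nu = p-1-s = r$, so the single term is $\binom{r}{r} = 1$. For part (iii), $r+s \geq p-1$ together with $s \leq p-2$ gives $1 \leq p-1-s \leq r$, so the single surviving term is $\binom{r}{p-1-s}$; this is nonzero as an integer, and I must check it is nonzero modulo $p$. Since $r \leq p-2 < p$, Lucas' theorem (or simply the fact that $\binom{r}{k}$ for $0 \leq k \leq r < p$ is a product of integers all less than $p$, hence a unit in $\ZZ_p$) shows $p \nmid \binom{r}{p-1-s}$. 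This also reconciles with part (ii), as $\binom{r}{p-1-s} = \binom{r}{r} = 1$ when $r+s = p-1$.

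Finally, the matrix picture is just a repackaging: indexing rows by $r$ and columns by $s$ from $1$ to $p-2$, the condition $r+s < p-1$ describes the strictly upper-left triangle (above the anti-diagonal) where all entries vanish; $r+s = p-1$ is the anti-diagonal where every entry is $1$; and $r+s > p-1$ is the lower-right region where entries are nonzero modulo $p$. The displayed schematic matrix with $\iddots$, the ones on the anti-diagonal, and the $\star$'s below is exactly this description, so nothing further is needed once parts (i)--(iii) are established.

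I do not anticipate a genuine obstacle here; the only point requiring a moment's care is the reduction of the divisibility condition, namely verifying that the parity constraint $2 \mid s+\nu$ in \eqref{eq:psi-def} is automatically implied by $p-1 \mid s+\nu$ in this range (which uses that $p \geq 5$, so $p - 1 \geq 4$ is even, and that the only relevant multiple of $p-1$ is $p-1$ itself, which is even). After that, everything is a direct evaluation of a one-term sum together with the elementary non-vanishing of binomial coefficients $\binom{r}{k}$ modulo $p$ for $r < p$.
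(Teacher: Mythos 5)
Your proof is correct and follows essentially the same route as the paper: both arguments reduce to observing that in the range $1 \leq r,s \leq p-2$ the only multiple of $p-1$ among the indices $s+\nu$ is $p-1$ itself, so the sum collapses to at most the single term $\binom{r}{p-1-s}$, which is a unit modulo $p$ since $r < p$. Your explicit remark that the parity condition $2 \mid s+\nu$ is redundant (because $p-1$ is even) is a point the paper leaves implicit, but it does not change the argument.
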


\begin{proof}
Let $p \geq 5$ and $1 \leq r,s \leq p-2$. Regarding the condition of the
sum~\eqref{eq:psi-def} of $\Psi_{r,s}(p)$, define
$\IM_{r,s} = \set{s,s+1,\dots,s+r} \cap (p-1)\NN$ as the set of numbers
$s +\nu$ divisible by $p-1$. We have to show three parts:

(i).~Since $r+s < p-1$, we have $\IM_{r,s} = \emptyset$ and $\Psi_{r,s}(p) = 0$.

(ii).~From $r+s = p-1$, it follows that $\IM_{r,s} = \set{r+s}$ and $\Psi_{r,s}(p) = 1$.

(iii).~We have $r+s \geq p-1$ and $r,s \leq p-2$.
Thus, $\IM_{r,s} = \set{p-1}$ and $\Psi_{r,s}(p) = \binom{r}{p-1-s}$.
Since $r < p$, this finally shows that $p \nmid \Psi_{r,s}(p)$.
\end{proof}

Now, we give the postponed proof of Theorem~\ref{thm:bs-denom}.

\begin{proof}[Proof of Theorem~\ref{thm:bs-denom}]
Let $r,s \geq 0$. We have to show six parts:

(i). This follows from~\eqref{eq:bs-recip}.

(ii),~(iii). Both relations for $\DC_{0,s}$ and $\DC_{1,s}$ follow
from~\eqref{eq:bs-bn} and Table~\ref{tbl:bs-coeff}.

(iv),~(v). Let $r,s \geq 2$. By Theorem~\ref{thm:bs-denom-2}~(i) and~(ii),
we have $2 \mid \Psi_{r,s}(2)$ and $3 \nmid \Psi_{r,s}(3)$,
implying that $2 \nmid \DC_{r,s}$ and $3 \mid \DC_{r,s}$, respectively.
Using parts (i) and (iii), it even follows that $3 \mid \DC_{r,s}$ holds
for $r,s \geq 1$.

(vi). Let $r \geq 2$ be even. Since $\DC_{r,0} = \DC_{r,1} = \DN_r$ by
parts~(i) -- (iii), the result holds for $s=0,1$. Now, let $s \geq 2$.
By Theorem~\ref{thm:bs-denom-2}~(ii) and~(iii), we have for $p \geq 3$
and $p-1 \mid r$ that $p \nmid \Psi_{r,s}(p)$, showing that $p \mid \DC_{r,s}$.

Since $\DN_n$ is squarefree by \eqref{eq:bn-denom}, $\DC_{r,s}$ must be
squarefree by~\eqref{eq:bs-def}, too. As a result, we infer from part~(v) that
$\DC_{r,s} > 1$ for $r,s \geq 1$. Parts~(i) and~(ii) imply for the remaining
cases that $\DC_{0,n} = \DC_{n,0} = \DN_n$ for $n \geq 0$.
By \eqref{eq:bn-denom} we finally identify the only cases
where $\DC_{r,s} = 1$, namely,
$(r,s) = (0,0)$, $(r,s) = (n,0)$, and $(r,s) = (0,n)$ for odd $n \geq 3$.
This proves the theorem.
\end{proof}


\section{Proofs of main theorems}
\label{sec:proofs}

\begin{proof}[Proof of Theorem~\ref{thm:main-1}]
Let $r,s \geq 0$. Note that $\DC_{r,s} = 1 \iffs \BS_{r,s} \in \ZZ$.
By Theorem~\ref{thm:bs-denom} we have the cases $(r,s) = (0,0)$, $(r,s) = (n,0)$,
and $(r,s) = (0,n)$ when $n \geq 3$ is odd.
Since $\BS_{0,0} = 1$ by Table~\ref{tbl:bs-coeff}, there remain the cases
$\BS_{n,0} = \BS_{0,n} = \BN_n = 0$ by \eqref{eq:bs-bn}, showing the result.
\end{proof}

\begin{proof}[Proof of Theorem~\ref{thm:main-2}]
We have to show for $r,s \geq 1$ that
\begin{equation} \label{eq:dc-prod-2}
  \DC_{r,s} = 2^{\varepsilon_2} \times 3 \times \mspace{-10mu}
    \prod_{\substack{5 \,\leq\, p \,\leq\, r+s+1 \\
      \res{r}_{p-1}+\res{s}_{p-1} \,\geq\, p-1}}
    \mspace{-10mu} p
\end{equation}
with the exponent $\varepsilon_2$ as defined before. We first consider the
factors $2$ and $3$ of $\DC_{r,s}$. By symmetry of $\DC_{r,s}$, the definition
of $\varepsilon_2$ follows from Theorem~\ref{thm:bs-denom} (iii) and (iv),
while the factor~$3$ follows from Theorem~\ref{thm:bs-denom} (v).
Next, we consider the prime factors $p \geq 5$ in the case $r=1$,
which implies by symmetry the case $s=1$ as well.
If $p \geq 5$ and $p \mid \DC_{1,s}$, then by Theorem~\ref{thm:bs-denom} (iii)
we have $p \mid \DN_{s+\delta}$ for $s \geq 2$, where $\delta = 0$ for
even $s \geq 2$, and $\delta = 1$ for odd $s \geq 3$.
Using \eqref{eq:bn-denom}, it then follows that $p-1 \mid s + \delta$.
Consequently, we obtain
\[
  p \leq s + \delta + 1 \leq r+s+1 \andq
  \res{r}_{p-1}+\res{s}_{p-1} = 1 + p - 1 - \delta \geq p-1,
\]
which are compatible with the bound and condition
of product~\eqref{eq:dc-prod-2}, respectively.

As a result, we can now assume that $r,s \geq 2$ and $p \geq 5$.
Using Theorem~\ref{thm:bs-denom-2}, we shall show that the product
formulas~\eqref{eq:dc-prod} and~\eqref{eq:dc-prod-2} are equivalent.
Therefore, it remains to show that
\begin{equation} \label{eq:psi-res}
  p \nmid \Psi_{r,s}(p) \iffq \res{r}_{p-1} + \res{s}_{p-1} \geq p-1.
\end{equation}

The case $p-1 \mid r$ (and so $p-1 \mid s$ by symmetry) follows from
Theorem~\ref{thm:bs-denom-2} (iii), since $\res{r}_{p-1} = p-1$.
For the remaining cases, we can switch by Lemma~\ref{lem:psi-congr-1}
and the definition of $\res{\cdot}_{p-1}$ to $r' = \res{r}_{p-1}$ and
$s' = \res{s}_{p-1}$, both variables $r', s'$ being in the
residue system $\set{1,\dots,p-2} \pmod{p-1}$.
Finally, we infer from Lemma~\ref{lem:psi-congr-2} that
\[
  p \nmid \Psi_{r',s'}(p) \iffq r' + s' \geq p-1,
\]
showing \eqref{eq:psi-res} completely. The remaining parts are given by
Theorem~\ref{thm:bs-denom} (i) and (ii). This completes the proof of
the theorem.
\end{proof}

\newpage


\appendix
\section{}

\vspace*{-2ex}
\begin{figure}[H]
\begin{center}
\includegraphics[width=12cm]{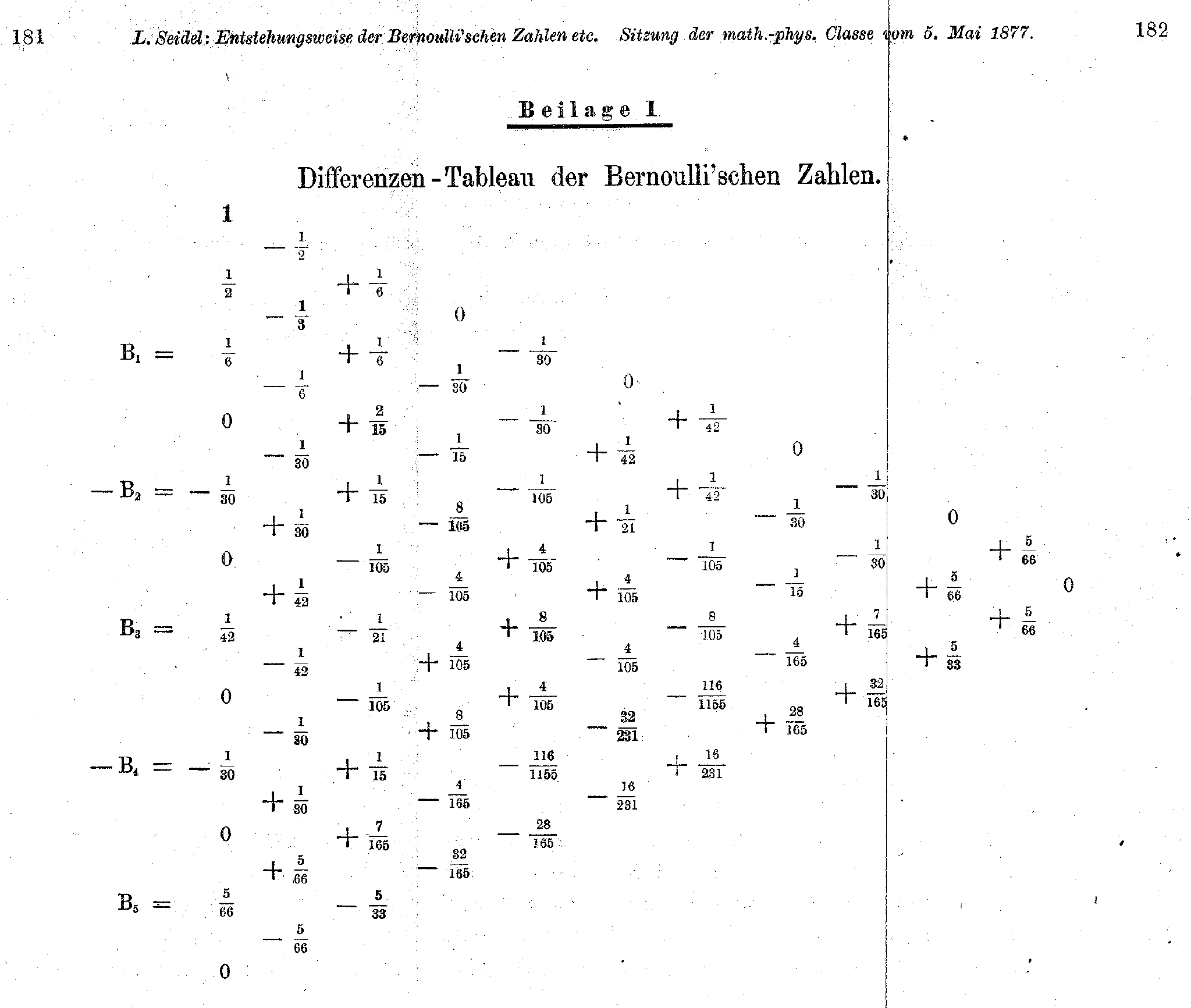}

\caption{\parbox[t]{23em}{Seidel's table of the differences of $\BN_n$ (numbers in old notation).}}
\label{fig:seidel}
\end{center}
\end{figure}

The reprinted table from Seidel~\cite{Seidel:1877} is copyrighted by
BAdW~\cite{BAdW} under the Creative Commons license CC-BY.


\section*{Acknowledgements}

We would like to thank the anonymous referee for several suggestions.


\bibliographystyle{amsplain}

\end{document}